\theoremstyle{plain}
\newtheorem{theorem}[subsection]{Theorem}
\newtheorem{proposition}[subsection]{Proposition}
\newtheorem{corollary}[subsection]{Corollary}
\theoremstyle{remark}
\newtheorem{example}[subsection]{Example}
\newtheorem{remark}[subsection]{Remark}
\newcommand{\bak}{\ensuremath{\backslash}}
\newcommand{\Mon}{\ensuremath{\mathsf{Mon}}}
\newcommand{\Grp}{\ensuremath{\mathsf{Grp}}}
\newcommand{\LPM}{\ensuremath{\mathsf{LPM}}}
\newcommand{\N}{\ensuremath{\mathbb{N}}}
\newcommand{\Z}{\ensuremath{\mathbb{Z}}}
\DeclareMathOperator{\Ima}{Im}
\DeclareMathOperator{\F}{F}
\DeclareMathOperator{\nf}{nf}
\def\pullback{
	\ar@{-}[]+R+<6pt,-1pt>;[]+RD+<6pt,-6pt>%
	\ar@{-}[]+D+<1pt,-6pt>;[]+RD+<6pt,-6pt>}
\def\ophalfsplitpullback{%
	\ar@{-}[]+R+<6pt,-1pt>;[]+RD+<6pt,-6pt>%
	\ar@{-}[]+D+<.5ex,-6pt>;[]+RD+<6pt,-6pt>}
\begin{document}

\title[Comparison between weakly protomodular and protomodular objects]{A comparison between weakly protomodular\\ and protomodular objects in unital categories}

\author[X.~Garc\'{\i}a-Mart\'{\i}nez]{Xabier Garc\'{\i}a-Mart\'{\i}nez}
\author[A.~Montoli]{Andrea Montoli}
\author[D.~Rodelo]{Diana Rodelo}
\author[T.~Van~der Linden]{Tim Van~der Linden}

\email{xabier.garcia.martinez@uvigo.gal}
\email{andrea.montoli@unimi.it}
\email{drodelo@ualg.pt}
\email{tim.vanderlinden@uclouvain.be}

\address[Xabier Garc\'{\i}a-Mart\'{\i}nez]{CITMAga \& Universidade de Vigo, Departamento de Ma\-tem\'{a}ticas, Esc.\ Sup.\ de Enx.\ Inform\'atica, Campus de Ourense, E--32004 Ourense, Spain}
\address[Andrea Montoli]{Dipartimento di Matematica ``Federigo Enriques'', Universit\`{a} degli Studi di Milano, Via Saldini 50, 20133 Milano, Italy}
\address[Diana Rodelo]{Departamento de Matem\'{a}tica, Universidade do Algarve, 8005-139 Faro, Portugal; Center for Research and Development in Mathematics and Applications (CIDMA), Department of Mathematics, University of Aveiro, 3810-193 Aveiro, Portugal; and CMUC, Departamento de Matem\'{a}tica, Universidade de Coimbra, 3000-143 Coimbra, Portugal}
\address[Tim Van~der Linden]{Institut de
	Recherche en Math\'ematique et Physique, Universit\'e catholique
	de Louvain, che\-min du cyclotron~2 bte~L7.01.02, B--1348
	Louvain-la-Neuve, Belgium; and Mathematics and Data Science, Vrije Universiteit Brussel, Pleinlaan 2, B--1050 Brussel, Belgium}

\thanks{The first author is supported by Ministerio de Ciencia e Innovaci\'on (Spain), with grant number PID2024-155502NB-I00 and by the Xunta de Galicia through the Competitive Reference Groups (GRC), ED431C 2023/31. The second author is member of the Gruppo Nazionale per le Strutture Algebriche, Geometriche e le loro Applicazioni (GNSAGA) dell'Istituto Nazionale di Alta Matematica ``Francesco Severi''. The third author acknowledges partial financial support by CIDMA under the Portuguese Foundation for Science and Technology (FCT, https://ror.org/00snfqn58) Multi-Annual Financing Program for R\&D Units, grants UID/4106/2025 and UID/PRR/4106/2025, and \emph{Centro de Matemática da Universidade de Coimbra} (CMUC), funded by the Portuguese Government through FCT/MCTES, UIDB/00324/2020. The fourth author is a Senior Research Associate of the Fonds de la Recherche Scientifique--FNRS. This work was supported by the Shota Rustaveli National Science Foundation of Georgia (SRNSFG), through grant FR-24-9660.}

\begin{abstract}
	We compare the concepts of \emph{protomodular} and \emph{weakly protomodular} objects within the context of unital categories. Our analysis demonstrates that these two notions are generally distinct. To establish this, we introduce \emph{left pseudocancellative unital magmas} and characterise weakly protomodular objects within the variety of algebras they constitute. Subsequently, we present an example of a weakly protomodular object that is not protomodular in this category.
\end{abstract}

\subjclass[2020]{18E13, 
	18E99, 
	18C05, 
	08C05
}
\keywords{Protomodular object; weakly protomodular object; unital category; left pseudocancellative magma}

\maketitle	

\section{Introduction}
Many of the intrinsic properties of non-abelian algebraic structures, such as groups, rings, Lie algebras and several others, have been successfully described in categorical terms thanks to the notion of \emph{semi-abelian category}~\cite{Janelidze-Marki-Tholen}. Semi-abelian categories play a central role in many recent developments of Categorical Algebra, such as non-abelian homology and cohomology (see, for instance,~\cite{Bourn-Janelidze:Torsors, BR2012, EGVdL, HVdL, RVdL2}).

A fundamental ingredient in the definition of a semi-abelian category is \emph{protomodularity}~\cite{Bourn1991}. A pointed category is protomodular if and only if the \emph{Split Short Five Lemma} holds in it. There are other equivalent ways of defining protomodularity. One of them makes use of the notion of \emph{jointly extremal-epimorphic} pair of morphisms: two morphisms \(f\colon A \to C\) and \(g\colon B \to C\) with the same codomain are jointly extremal-epimorphic if, whenever they factor through a common monomorphism \(m \colon M \rightarrowtail C\), then \(m\) is an isomorphism. In concrete algebraic contexts, this means that the object \(C\) is generated by the images of \(f\) and \(g\). A \emph{point}, namely a pair \((f, s)\) in which \(f\) is a split epimorphism and \(s\) is a chosen section of \(f\), is said to be \emph{strong} if the section \(s\) and the kernel \(k\) of \(f\) are jointly extremal-epimorphic (here we are supposing that the category is finitely complete and pointed, although the definition of a strong point can be formulated in non-pointed categories as well). The point \((f, s)\) is called \emph{stably strong} if every point obtained as a pullback, along any morphism, of \((f, s)\) is strong. A pointed finitely complete category is protomodular if and only if every point in it is strong or, equivalently, every point is stably strong (see e.g.~\cite{Borceux-Bourn} for a proof of this fact).

In order to find a purely categorical characterisation of groups amongst monoids, in~\cite{MRVdL-TCOGAM} the authors considered a local version of protomodularity, introducing the notion of \emph{protomodular object} inside a (not necessarily protomodular) category. An object \(X\) in a pointed finitely complete category is such if every point over~\(X\), i.e., every point \((f, s)\) in which \(X\) is the codomain of the split epimorphism \(f\), is stably strong. Obviously, a category is protomodular if and only if every object in it is protomodular. The class of protomodular objects in a category satisfies strong algebraic properties, as shown in~\cite{MRVdL-TCOGAM}. In the category of monoids, the protomodular objects are precisely the groups; similarly, in the category of semirings, the protomodular objects are the rings. Note that the protomodularity of an object depends on the category where it is considered. For instance, any group is a protomodular object in the category \(\Grp\) of groups and in the category \(\Mon\) of monoids, but it is not protomodular in the category of unital magmas unless it is trivial---see Theorem~\ref{Theorem Unital Magmas} for an explicit proof of this fact.

Later, in~\cite{GM-ACS}, the author considered a weaker notion, the concept of \emph{weakly protomodular object}: an object \(X\) is such when every point over \(X\) is strong. He showed that in \(\Mon\) the weakly protomodular objects are precisely the groups, so protomodular and weakly protomodular objects coincide in \(\Mon\). Similarly, one can show that the same happens for semirings: the protomodular and weakly protomodular objects are the rings.

It is easy to see that every protomodular object is a weakly protomodular object. The converse is false, since there are weakly protomodular objects that are not protomodular. For example, in every pointed finitely complete category the zero object is weakly protomodular, while, as observed in~\cite[Proposition 7.12]{MRVdL-TCOGAM}, the condition that the zero object is protomodular is equivalent to the category being unital in the sense of~\cite{B96}. Recall that a pointed finitely complete category is a \emph{unital category} if, for every pair of objects \((X, Y)\) the canonical morphisms \({\langle 1_X, 0 \rangle \colon X \to X \times Y}\) and  \(\langle 0, 1_Y \rangle \colon Y \to X \times Y\) are jointly extremal-epimorphic. So, for example, in the category of pointed sets the singleton is a weakly protomodular object but not a protomodular object, since this category is not unital.

In all previously studied examples of unital categories, protomodular and weakly protomodular objects have been found to coincide. This includes cases such as monoids and semirings, as mentioned above, but also cocommutative bialgebras over an algebraically closed field, where (weakly) protomodular objects are precisely the cocommutative Hopf algebras, as demonstrated in~\cite{GM-VdL1}. This consistency raises the question of whether protomodular and weakly protomodular objects always coincide in unital categories.

In this paper we give a negative answer to this question. To this aim, we introduce the algebraic structure of \emph{left pseudocancellative unital magmas} (briefly LPM). Left pseudocancellative unital magmas and morphisms between them form a unital category, denoted by \(\LPM\). We characterise the weakly protomodular objects in \(\LPM\) (Theorem \ref{T:wprot}) and we show that all the subobjects of a protomodular object must be weakly protomodular (Theorem \ref{T:prot}). Then we exhibit a weakly protomodular object in \(\LPM\) with a subobject which is not weakly protomodular, showing that it is not a protomodular object. In conclusion, the notions of protomodular and weakly protomodular objects do not coincide in all unital categories.

\section{The case of left pseudocancellative unital magmas}

We start by proving a claim made in the Introduction:

\begin{theorem}\label{Theorem Unital Magmas}
	The variety of unital magmas does not admit non-trivial (weakly) protomodular objects.
\end{theorem}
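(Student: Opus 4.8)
The plan is to produce, over an arbitrary non-trivial unital magma $X$, a point that is not strong. Since a stably strong point is in particular strong, and since every protomodular object is weakly protomodular, a single non-strong point over $X$ witnesses that $X$ is neither a protomodular nor a weakly protomodular object; as the variety of unital magmas is pointed and finitely complete (its zero object being the one-element magma), this is all that is required. So fix an element $a \in X$ with $a \neq 1$.

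I would take $A$ to be the unital magma with underlying set $X \sqcup \{\tilde a\}$ --- a copy of $X$ together with one new element $\tilde a$ --- whose operation restricts to that of $X$ on $X$, satisfies the unit laws $1 \cdot \tilde a = \tilde a = \tilde a \cdot 1$ with $1$ as unit, and evaluates every other product involving $\tilde a$ ``inside $X$'' after substituting $a$ for $\tilde a$: explicitly $\tilde a \cdot x := ax$ and $x \cdot \tilde a := xa$ for $x \in X \setminus \{1\}$, and $\tilde a \cdot \tilde a := a \cdot a$. Let $f \colon A \to X$ be the identity on $X$ with $f(\tilde a) = a$, and let $s \colon X \rightarrowtail A$ be the inclusion.

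The routine verifications are that $A$ is a unital magma, that $f$ is a homomorphism --- on $X$ this is clear, the products $1 \cdot \tilde a$ and $\tilde a \cdot 1$ are sent to $a = 1 \cdot a = a \cdot 1$, and the ``evaluate in $X$'' rule was chosen exactly so that $f(\tilde a \cdot z) = a \cdot f(z)$ and $f(z \cdot \tilde a) = f(z) \cdot a$ hold --- and that $fs = \id_X$, so that $(f,s)$ is a point over $X$. The heart of the argument is then immediate: since $a \neq 1$ we have $\Ker f = f^{-1}(1) = \{1\}$, the zero subobject, and $\Ima s = X$; moreover $X$ is a submagma of $A$, because $1 \in X$ and the only products that output $\tilde a$ are $1 \cdot \tilde a$ and $\tilde a \cdot 1$, so $X\cdot X \subseteq X$. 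Hence $\ker f$ and $s$ both factor through the monomorphism $X \rightarrowtail A$, which is not an isomorphism as $\tilde a \notin X$; so they are not jointly extremal-epimorphic, the point $(f,s)$ is not strong, and we are done.

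The only place requiring care is the interplay between the unit laws and the operation on $A$: the unit laws force $1 \cdot \tilde a = \tilde a = \tilde a \cdot 1$, so these two products cannot be pushed into $X$, while pushing all the remaining ones into $X$ is precisely what prevents $\tilde a$ from ever being generated by elements of $X$ and thereby makes $X$ a proper submagma containing both $\Ker f$ and $\Ima s$. No associativity or further identity enters, since the signature of a unital magma imposes only the unit laws.
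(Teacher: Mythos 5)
Your proof is correct, but it takes a genuinely different route from the paper's. The paper works with a single ``generic'' point over $M$: it forms the coproduct $F+M$ with the free unital magma $F$ on one generator $x$, takes the point $(\lgroup \overline{m}\;\;1_M\rgroup, \iota_M)$, and argues via the normal form of words in $F+M$ (the only factorisations of $x$ are $1x$, $x1$, $1(x1)$, \dots) that strongness would force $x$ into the kernel and hence $m=e$. You instead hand-build a minimal counterexample: the one-point extension $A=X\sqcup\{\tilde a\}$ with $f(\tilde a)=a$, where the operation is rigged so that $X$ is a proper submagma containing both $\Ima s$ and $\Ker f=\{1\}$, so the point $(f,s)$ is visibly not strong. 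Your construction is more elementary --- it avoids describing free unital magmas and coproducts in the variety and the attendant normal-form argument --- and it isolates exactly why the absence of any identities beyond the unit laws kills protomodularity: one is free to define all products involving the new element (other than those with $1$) to land back inside $X$. The paper's coproduct-based argument, on the other hand, is the same template reused in the proofs of Theorems~\ref{T:wprot} and~\ref{T:prot}, so it buys uniformity of method across the paper. All the verifications in your construction check out: $A$ is a unital magma (only the unit laws need to hold, and your two special clauses $1\cdot\tilde a=\tilde a=\tilde a\cdot 1$ secure them without clashing with the ``evaluate in $X$'' clauses, which exclude $x=1$), $f$ and $s$ are homomorphisms with $fs=\id_X$, and $X\cdot X\subseteq X$ so the inclusion is a non-invertible mono through which both $s$ and the kernel factor.
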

\begin{proof}
	Let \(M\) be a unital magma with unit denoted by \(e\). Suppose \(m\in M\). Let \(F\) be the free unital magma on one element \(x\), and \(\overline{m}\colon F\to M\) the morphism sending \(x\) to \(m\). Consider the split epimorphism \({\lgroup \overline{m} \;\; 1_M \rgroup\colon F+M \to M}\) and the induced point determined by the coproduct inclusion \(\iota_M\colon M\to F+M\). If this point is strong, then \(x\) in \(F+M\) can be written as a product of elements in the kernel \(K\) of \(\lgroup \overline{m} \;\; 1_M \rgroup\) with elements in \(M\). Yet, the only ways to write \(x\) as a product in \(F+M\) are \(1x\), \(x1\), \(1(x1)\), \(1(1x)\), \((x1)1\), etc., where \(1\) denotes the unit of \(F+M\). Necessarily then, \(x\in K\), which implies that \(m=\lgroup \overline{m} \;\; 1_M \rgroup(x)=e\). Hence \(M\) is trivial.
\end{proof}

We recall that a \emph{left quasigroup} is a set \(Q\) equipped with a binary operation \(* \colon Q\times Q \to Q\), such that for all \(y\in Q\), the \emph{left multiplication map}
\[
	M_y \colon Q \to Q\colon x \mapsto y*x
\]
is bijective. Left quasigroups form a variety of universal algebras. In fact, a left quasigroup can be equivalently defined as a set \(Q\) with two binary operations: a multiplication denoted by \(*\) and a left division denoted by \(\bak \,\), satisfying the following identities:
\begin{align}
	y & = x * (x \bak y) \label{identities_leftloop1} \\
	y & = x \bak (x * y) \label{identities_leftloop2}
\end{align}

A left quasigroup with an \emph{identity element}, i.e., an element \(e \in Q\) such that
\begin{equation}
	x = e*x = x*e, \label{identities_leftloop3}
\end{equation}
for all \(x \in Q\), it is called a \emph{left loop}.

If \(Q\) is a left loop, then for each \(x \in Q\), we have that
\begin{equation}\label{x bak x=e}
	x \bak x = x \bak (x * e) \stackrel{\eqref{identities_leftloop2}}{=} e.
\end{equation}
It is known that the category of left loops is semi-abelian~\cite{BoJa}, thus it is a pro\-to\-mo\-du\-lar category. Consequently, all of its objects are (weakly) protomodular.

In this work we consider a weakening of the concept of left loop. We will say that a set \(X\) with two binary operations \(*\) and \(\bak\) and a nullary operation \(e\) is a \emph{left pseudocancellative unital magma} (\emph{LPM} for short) if it satisfies identities~\eqref{identities_leftloop1} and~\eqref{identities_leftloop3}. We have not found any reference of this structure in the literature; the name is inspired by \emph{left cancellative magmas}, which are sets with two operations \(*\) and \(\bak\) satisfying identity~\eqref{identities_leftloop2}. Note that, in general, the identity \(x\bak x=e\) need not hold in an LPM: see Proposition~\ref{if x div x=e then protomodular}. We observe that axiom~\eqref{identities_leftloop1} can be equivalently reformulated by saying that, for an algebra \(X\) and any element \(y \in X\), we have \(M_y\circ D_y=1_X\): the left multiplication map
\[
	M_y \colon X \to X\colon x \mapsto y*x
\]
is a left inverse for the map
\[
	D_y \colon X \to X\colon x \mapsto y \bak x.
\]
Hence we get:

\begin{proposition}\label{Prop with 3 pps}
	Let \(X\) be an LPM. Then:
	\begin{enumerate}
		\item[(i)] all maps \(M_y\) are surjective and all maps \(D_y\) are injective;
		\item[(ii)] \(D_e=1_X\);
		\item[(iii)] if \(x \bak y = e\), then \(x = y\).
	\end{enumerate}
\end{proposition}

\begin{proof}
	(i) is obvious. (ii) holds because \(D_e(x)=e\bak x \stackrel{\eqref{identities_leftloop3}}{=} e*(e\bak x) \stackrel{\eqref{identities_leftloop1}}{=} x\). (iii) If \(x \bak y = e\), then
	\[
		x \stackrel{\eqref{identities_leftloop3}}{=} x * e = x * (x\bak y) \stackrel{\eqref{identities_leftloop1}}{=} y.\qedhere
	\]
\end{proof}

\begin{corollary}
	Any finite LPM is a left loop.
\end{corollary}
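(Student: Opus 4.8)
The plan is to show that, on a finite set, the weaker LPM axioms automatically force the one missing left-quasigroup identity~\eqref{identities_leftloop2}, so that a finite LPM is exactly a left loop. Since a left loop is by definition a left quasigroup (identities~\eqref{identities_leftloop1} and~\eqref{identities_leftloop2}) satisfying~\eqref{identities_leftloop3}, and~\eqref{identities_leftloop1} and~\eqref{identities_leftloop3} hold by hypothesis in any LPM, everything reduces to proving~\eqref{identities_leftloop2}, i.e.\ that $x\bak(x*y)=y$ for all $x,y\in X$.

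First I would record the key consequence of finiteness: by Proposition~\ref{Prop with 3 pps}(i) each map $M_y$ is surjective, hence---$X$ being finite---bijective; equivalently, by Proposition~\ref{Prop with 3 pps}(ii) each $D_y$ is injective, hence bijective. (Only one of the two observations is actually needed.) Second, I would reinterpret identity~\eqref{identities_leftloop1} as the statement that $M_x\circ D_x=\id_X$, that is, $D_x$ is a right inverse of $M_x$ (read the other way, a left inverse of $M_x$). Third, since $M_x$ is a bijection, its one-sided inverse $D_x$ is automatically its two-sided inverse, so $D_x\circ M_x=\id_X$; unwinding this equality gives $x\bak(x*y)=y$ for all $y\in X$, which is precisely~\eqref{identities_leftloop2}. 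Hence $X$ is a left quasigroup, and together with~\eqref{identities_leftloop3} it is a left loop.

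There is essentially no hard step here: the whole content is the elementary fact that a surjective---equivalently, injective---self-map of a finite set is bijective, combined with the observation that~\eqref{identities_leftloop1} already exhibits $M_x$ and $D_x$ as mutually one-sided inverse maps. The only point requiring a little care is bookkeeping: making sure~\eqref{identities_leftloop1} is used in the direction $M_x\circ D_x=\id_X$, and invoking finiteness precisely to upgrade one-sided invertibility to two-sided invertibility, after which identities~\eqref{identities_leftloop1}, \eqref{identities_leftloop2} and~\eqref{identities_leftloop3} together say exactly that $X$ is a left loop.
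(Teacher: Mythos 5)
Your proposal is correct and follows essentially the same route as the paper: the paper's one-line proof also rests on the fact that a surjective self-map of a finite set is bijective, applied to the maps $M_y$ from Proposition~\ref{Prop with 3 pps}(i). You merely spell out the extra (implicit) step that, once $M_x$ is bijective, identity~\eqref{identities_leftloop1} forces $D_x=M_x^{-1}$ and hence identity~\eqref{identities_leftloop2}.
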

\begin{proof}
	A surjection from a finite set to itself is automatically injective.
\end{proof}

\begin{proposition}
	Any set \(X\) with a chosen element \(e\) and a binary operation~\(\bak\) such that all maps \(D_y \colon X \to X\colon x \mapsto y \bak x\) are injective and conditions (ii) and (iii) of Proposition~\ref{Prop with 3 pps} hold, admits an LPM structure by defining the \(*\) operation via
	\[
		y*x=
		\begin{cases}
			D_y^{-1}(x) & \text{if \(x \in \Ima D_y\)}     \\
			y           & \text{if \(x \notin \Ima D_y\).}
		\end{cases}
	\]
\end{proposition}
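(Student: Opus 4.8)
The plan is to check, directly from the definitions, that $(X,*,\bak,e)$ satisfies the two identities required of an LPM, namely~\eqref{identities_leftloop1} and~\eqref{identities_leftloop3}; since $\bak$ and $e$ are the operations we started with, there is nothing else to verify. Before that I would record that $*$ is well defined: by condition~(ii) of Proposition~\ref{Prop with 3 pps} each map $D_y$ is injective, so $D_y^{-1}\colon \Ima D_y \to X$ is an honest function, and the two clauses in the definition of $y*x$ partition $X\times X$.

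For~\eqref{identities_leftloop1}, I would fix $x,y\in X$, observe that $x\bak y = D_x(y)\in\Ima D_x$, so that the first clause applies, and conclude $x*(x\bak y) = D_x^{-1}(D_x(y)) = y$. For the first equality in~\eqref{identities_leftloop3}, I would use that $D_e = 1_X$ (again condition~(ii)), whence $\Ima D_e = X$, so the first clause gives $e*x = D_e^{-1}(x) = x$ for every $x$.

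The only step that requires a little care is the remaining equality $x*e = x$: here one must decide whether $e$ lies in $\Ima D_x$, and this is precisely where condition~(iii) enters. If $e = D_x(z) = x\bak z$ for some $z\in X$, then~(iii) forces $z = x$; hence $e\in\Ima D_x$ if and only if $x\bak x = e$, and when this happens $x$ is, by injectivity of $D_x$, the unique $D_x$-preimage of $e$, so that $x*e = D_x^{-1}(e) = x$. When $x\bak x\neq e$ we have $e\notin\Ima D_x$, and the second clause directly yields $x*e = x$. Either way $x*e = x$, so identities~\eqref{identities_leftloop1} and~\eqref{identities_leftloop3} all hold and $(X,*,\bak,e)$ is an LPM. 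I do not expect any genuine obstacle: the small case distinction for $x*e$, which combines (ii) and (iii), is the only point where the hypotheses must be used together carefully.
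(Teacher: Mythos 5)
Your proof is correct and follows essentially the same route as the paper's: well-definedness from injectivity of $D_y$, identity~\eqref{identities_leftloop1} from $x\bak y\in\Ima D_y$, and the case split on whether $e\in\Ima D_x$ (using condition~(iii) to pin down the preimage as $x$ itself) for $x*e=x$. The only cosmetic difference is that you verify $e*x=x$ directly from $D_e=1_X$ rather than by invoking the already-established identity~\eqref{identities_leftloop1}.
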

\begin{proof}
	Note that the operation \(*\) is well defined, since it is built by specifying a left inverse of the injective maps \(D_y\). Axiom \eqref{identities_leftloop1} is then obvious.
	
	To prove \eqref{identities_leftloop3}: \(e*x=e*D_e(x)=e*(e\bak x) \stackrel{\eqref{identities_leftloop1}}{=} x\). On the other hand, if \(e\in \Ima D_x\), then \(e=x\bak a\), for some \(a\in X\). By the assumption that condition (iii) of Proposition~\ref{Prop with 3 pps} holds, \(x=a\). Then, \(x*e=x*(x\bak x) \stackrel{\eqref{identities_leftloop1}}{=} x\). If \(e\notin \Ima D_x\), then \(x*e=x\) by definition.
\end{proof}

Let us denote the category of all LPM, and morphisms between them, by \(\LPM\). Since LPM form a variety of universal algebras, \(\LPM\) is a complete and cocomplete category. Moreover, since LPM have a unique constant in their signature, \(\LPM\) is a pointed variety. It was shown in~\cite{Borceux-Bourn} that a pointed variety of universal algebras is a unital category if and only if it contains, among its operations, those of unital magmas. By equation~\eqref{identities_leftloop3}, this allows us to conclude that \(\LPM\) is a unital category. \(\LPM\) is not a protomodular category, as we will see below by proving that not every object in it is protomodular (Example~\ref{E:nwp}). In order to characterise the weakly protomodular objects in the category \(\LPM\) (Theorem~\ref{T:wprot}), we first show that the elements in a free LPM admit unique normal forms, by means of a simple term rewriting argument.

We orient the axioms of left pseudocancellative unital magmas through the following set of rewrite rules:
\[
	R = \bigl\{
	x*(x\backslash y) \to y,\;
	e * x \to x,\;
	x * e \to x,\;
	e \backslash y \to y
	\bigr\}.
\]
The last rule is a consequence of the axioms, since substituting \(e\) for \(x\) in equation\eqref{identities_leftloop1} yields \(y = e*(e\backslash y)\), which reduces to \(e\backslash y\) by the unit law~\eqref{identities_leftloop3}.

\begin{proposition}\label{rewriting}
	The rewriting system \(R\) is terminating and confluent. Hence every element of
	a free LPM admits a unique normal form.
\end{proposition}

\begin{proof}
	\emph{Termination.}
	Define a measure \(\mu(t) = (\#_\ast(t), \#_{\backslash}(t))\in \mathbb{N}\times \mathbb{N}\), where \(\#_\ast(t)\) (resp.\ \(\#_{\backslash}(t)\)) is the number of occurrences of \(*\) (resp.\ \(\backslash\)) in a chosen representation of an element \(t\) of a free LPM. We order \(\mathbb{N}\times \mathbb{N}\) lexicographically. Each rule strictly decreases this measure: the first three remove one occurrence of \(*\), the first and last remove one occurrence of \(\backslash\). Thus no infinite reduction exists.
	
	\emph{Confluence.}
	By Newman's Lemma, it suffices to check local confluence. The left-hand sides are
	\[
		\begin{aligned}
			L_1(x,y) & = x*(x\backslash y), \\
			L_2(x)   & = e*x,               \\
			L_3(x)   & = x*e,               \\
			L_4(y)   & = e\backslash y.
		\end{aligned}
	\]
	The only non-trivial overlap occurs when \(x=e\) in \(L_1\), giving the term
	\(e*(e\backslash y)\). We have
	\[
		e*(e\backslash y) \xrightarrow{L_1} y,
		\qquad
		e*(e\backslash y) \xrightarrow{L_2} e\backslash y \xrightarrow{L_4} y,
	\]
	so the critical pair is joinable. All other overlaps are trivial. Hence the system
	is locally confluent.
\end{proof}

\begin{theorem}\label{T:wprot}
	Let \(X\) be an LPM. The following statements are equivalent:
	\begin{enumerate}
		\item[(i)] \(X\) is a weakly protomodular object in \(\LPM\);
		\item[(ii)] for any \(x \in X\), there exist \(x_1\), \dots, \(x_n \in X\) such that
		      \begin{equation} \label{condition for weak protomodularity}
			      x_1\bak( x_2 \bak ( \cdots \bak (x_n \bak x) \cdots )) = e.
		      \end{equation}
	\end{enumerate}
\end{theorem}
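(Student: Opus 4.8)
The plan is to unwind what weak protomodularity means in $\LPM$ and then treat the two implications by different means. Recall that a point over $X$ is a pair $(f\colon A\to X,\ s\colon X\to A)$ of morphisms of LPM with $fs=1_X$; writing $K=f^{-1}(e)$ for its kernel, this point is strong exactly when $A$ coincides with the sub-LPM $\langle s(X)\cup K\rangle$ generated by the image of the section together with the kernel. So (i) asserts that, for \emph{every} point over $X$, each element of the total object can be built from $s(X)$ and $K$ using $*$ and $\bak$. I would prove (ii)$\Rightarrow$(i) by a direct computation inside an arbitrary point, and (i)$\Rightarrow$(ii) by testing weak protomodularity against a single, ``generic'' point built from the free LPM on one generator, in the spirit of the proof of Theorem~\ref{Theorem Unital Magmas}.

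For (ii)$\Rightarrow$(i), let $(f\colon A\to X,s)$ be any point over $X$, with $K=f^{-1}(e)$, and let $a\in A$. Put $x=f(a)$ and choose, by~(ii), elements $x_1,\dots,x_n\in X$ with $x_1\bak( x_2 \bak ( \cdots \bak (x_n \bak x) \cdots ))=e$. Then the element
\[
c\mathrel{:=}s(x_1)\bak( s(x_2) \bak ( \cdots \bak (s(x_n) \bak a) \cdots ))
\]
satisfies $f(c)=x_1\bak(\cdots\bak(x_n\bak x)\cdots)=e$, so $c\in K$. Using~\eqref{identities_leftloop1} one recovers $a$ from $c$ step by step: $s(x_1)*c=s(x_2)\bak(\cdots\bak(s(x_n)\bak a)\cdots)$, then $s(x_2)*(s(x_1)*c)=s(x_3)\bak(\cdots)$, and after $n$ steps $a=s(x_n)*(s(x_{n-1})*(\cdots*(s(x_1)*c)\cdots))$. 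Hence $a\in\langle s(X)\cup K\rangle$; since $a$ and the point were arbitrary, every point over $X$ is strong.

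For (i)$\Rightarrow$(ii), fix $a\in X$, let $F$ be the free LPM on one generator $t$, and let $\overline a\colon F\to X$ be the morphism with $\overline a(t)=a$. Apply~(i) to the point $\lgroup\overline a\;\;1_X\rgroup\colon F+X\to X$ with section $\iota_X$ and kernel $K$: strongness gives $t\in B\mathrel{:=}\langle\iota_X(X)\cup K\rangle$. I would then establish a normal form for the coproduct $F+X$, just as for unital magmas: the rewriting rules replacing $u*(u\bak v)$ by $v$, each of $e*u$ and $u*e$ by $u$, and $e\bak u$ by $u$ — together with evaluation of any subterm lying entirely inside $X$ — are terminating and confluent, so every element of $F+X$ has a unique reduced form, a word in $t$ and the elements of $X$ with no subterm of the above shapes. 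The two features to extract are: (a) a reduced word involving the letter $t$ is not an element of $\iota_X(X)$, and its reduced form is either $t$ itself or has $\bak$ as its outermost operation; and (b) if $z\in F+X$ has reduced form $t$ or with outermost operation $\bak$, then $u*v=z$ holds in $F+X$ exactly when $v=u\bak z$ or ($v=e$ and $u=z$), whereas $u\bak v=z$ forces $u=e$ and $v=z$, unless the reduced form of $z$ has outermost $\bak$, in which case the reduced forms of $u$ and $v$ must be precisely its two immediate subterms.

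From (a)--(b) the conclusion follows by well-founded descent. Write $B=\bigcup_n B_n$ with $B_0=\iota_X(X)\cup K$ and $B_{n+1}=B_n\cup\{u*v,\,u\bak v\mid u,v\in B_n\}$, and let $\mu(w)$ be the least $n$ with $w\in B_n$. Call $w\in B$ \emph{admissible} if $w=u_k\bak(u_{k-1}\bak(\cdots\bak(u_1\bak t)\cdots))$ for some $k\geq0$ and $u_1,\dots,u_k\in B$; by assumption $t$ is admissible. If some admissible $w$ lies in $K$, applying $\lgroup\overline a\;\;1_X\rgroup$ gives $f(u_k)\bak(\cdots\bak(f(u_1)\bak a)\cdots)=e$ with all $f(u_i)\in X$, which is~\eqref{condition for weak protomodularity} after relabelling (with empty chain when $w=t\in K$, i.e.\ $a=e$). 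Otherwise, pick an admissible $w\notin K$; by~(a) it is not in $\iota_X(X)$, so $\mu(w)\geq1$ and $w=u'*v'$ or $w=u'\bak v'$ with $u',v'\in B_{\mu(w)-1}$. By~(b), in the first case either $u'=w$ — impossible since $\mu(w)\leq\mu(u')\leq\mu(w)-1$ — or $v'=u'\bak w$, so $u'\bak w$ is admissible with smaller $\mu$; in the second case either $u'=e$ and $v'=w$ — again impossible by the $\mu$ estimate — or $v'$ is the element obtained from $w$ by deleting its outermost division, hence admissible with smaller $\mu$. In every case a strictly $\mu$-smaller admissible element is produced; as $\mu$ takes values in $\N$, iterating from $t$ must terminate, and it can only terminate by reaching an admissible element of $K$, which yields the required $x_1,\dots,x_n$. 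The one genuinely laborious ingredient is the normal-form analysis of $F+X$ behind (a) and (b) — termination and confluence of the rewriting system, including its interaction with the operations already present in $X$, and reading off which reduced words can be the value of a product or a left division; everything afterwards is routine bookkeeping with $\mu$.
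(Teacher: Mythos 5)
Your argument is correct and rests on the same two pillars as the paper's: the implication (ii)$\Rightarrow$(i) is essentially word-for-word the paper's computation (the kernel element $c$ and the $n$-step recovery of $a$ via~\eqref{identities_leftloop1}), and for (i)$\Rightarrow$(ii) you test strongness against the very same point $\lgroup \overline{a}\;\;1_X\rgroup\colon \F(t)+X\to X$ with section $\iota_X$. Where you genuinely diverge is in how the all-$\bak$ word is extracted. The paper fixes one word $w$ expressing $t$ over $\iota_X(X)\cup K$, argues that its rightmost letter is $t$, isolates a ``rightmost part'' $w'\in K$, substitutes $x$ for the other occurrences of $t$, and asserts that every connective must be $\bak$ because the only cancellations in the coproduct come from~\eqref{identities_leftloop1}; you instead run a well-founded descent along the generation filtration of $\langle \iota_X(X)\cup K\rangle$, peeling one level at a time. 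Both versions lean on the same unproved technical core --- a normal-form/cancellation analysis of the coproduct $\F(t)+X$ --- which the paper dispatches in a sentence and which you state precisely as (a)--(b) and flag as the laborious step. What your descent buys is that the informal notions of ``rightmost part'' and of substitution disappear: termination comes for free from the strict decrease of $\mu$, and the chain entries never need to lie in $\iota_X(X)$, since only their images under the retraction matter at the end. Two small repairs are needed. First, claim (a) is false for arbitrary reduced words containing $t$ (e.g.\ $y*t$ with $e\neq y\in X$ is reduced with outermost $*$); it should be stated only for your admissible elements, which is all the descent uses. Second, admissibility is better defined via reduced forms: in the case $w=u'\bak v'$ the peeled element $v'$ is only known to have the correct reduced form, not to come equipped with a chain of entries in $B$, and phrasing admissibility through reduced forms makes the induction close cleanly while still allowing the final application of the retraction.
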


\begin{proof}
	(ii) \(\Rightarrow\) (i) We consider any split extension
	\[
		\xymatrix{0 \ar[r] & K \ar@{ |>->}[r]^-{k} & Y \ar@<.5ex>[r]^-{f} & X \ar@{ >->}@<.5ex>[l]^-{s} \ar[r]& 0}
	\]
	and we take any \(y \in Y\). Let \(x = f(y)\) and let \(x_i\) be the elements whose existence is guaranteed by (ii).
	Then by identity \eqref{identities_leftloop1},
	\[
		y = s(x_n) * \bigl(	\cdots *\bigl(s(x_1)*\bigl(s(x_1)\bak(s(x_2) \bak ( \cdots \bak (s(x_n) \bak y) \cdots ))\bigr) \bigr)\cdots \bigr),
	\]
	where clearly \(s(x_1)\bak(s(x_2) \bak ( \cdots \bak (s(x_n) \bak y) \cdots ))\) belongs to the kernel \(K\) of \(f\), thanks to \eqref{condition for weak protomodularity}. This proves that any element \(y\in Y\) can be written as a product of elements in~\(K\) and elements of the image of \(s\). Hence the point \((f,s)\) is strong and, consequently, \(X\) is a weakly protomodular object.
	
	(i) \(\Rightarrow\) (ii) We choose any \(x \in X\) and we consider the split extension
	\[
		\xymatrix{0 \ar[r] & K \ar@{ |>->}[r]^-{k} & \F(z) + X \ar@<.5ex>[r]^-{\lgroup \overline{x} \;\; 1_X \rgroup} & X \ar@{ >->}@<.5ex>[l]^-{\iota_{X}} \ar[r]& 0,}
	\]
	where \(\F(z)\) denotes the free LPM on one generator \(z\not\in X\), \(\overline{x}\colon \F(z)\to X\) denotes the universal morphism taking \(z\) to \(x\),  \(\iota_X\colon X\to \F(z)+X\) is the coproduct inclusion, and  \(\lgroup \overline{x} \;\; 1_X \rgroup\) is induced by the universal property of the coproduct.
	
	Since \(X\) is weakly protomodular, \(z \in \F(z) + X\) can be written as a word consisting of elements of \(X\) and elements of the kernel \(K\). Specifically, there are natural numbers \(r\) and \(\ell\) and elements \(k_1\), \dots, \(k_r\) of \(K\), \(x_1\), \dots, \(x_\ell\) of \(X\), and \(w\) of the free algebra \(\F(k_1,\dots,k_r,x_1,\dots,x_\ell)\) on those elements, such that the canonical morphism
	\[
		\varphi\colon \F(k_1,\dots,k_r,x_1,\dots,x_\ell)\to \F(z)+X
	\]
	maps \(w\) to \(z\).
	
	By Proposition~\ref{rewriting}, in the free algebra \(F(\{z\}\cup X)\), each element has a unique irreducible representative (normal form) with respect to \(R\).  Concretely, for each \(t\) there is a unique irreducible \(\nf(t)\) such that \(t \xrightarrow{*}_R \nf(t)\), and \(\nf(t)\) is the normal form of the element represented by \(t\).
	
	Let us, in particular, write
	\[
		\psi\colon \F(k_1,\dots,k_r,x_1,\dots,x_\ell)\to \F(\{z\}\cup X)
	\]
	for the canonical morphism sending each \(k_i\) to any term in \(\F(\{z\}\cup X)\) in normal form which evaluates to \(k_i\) in \(\F(z)+X\). The element \(z\in \F(\{z\}\cup X)\) has the normal form \(\mathrm{nf}(z)=z\) (the single-letter term \(z\) is irreducible), and the equality \(\varphi(w)=z\) in the coproduct algebra means that the term \(\psi(w)\in \F(\{z\}\cup X)\) reduces to the irreducible term \(z\), so that \(\psi(w)\xrightarrow{*}_R z\).
	
	Consider the finite reduction sequence \(\psi(w) \xrightarrow{*}_R z\). If, in the word \(\psi(w)\), we replace all but one of the instances of \(z\) by the letter \(x\), we still find a term which reduces to \(z\), by precisely the same steps that allow us to reduce \(\psi(w)\) to \(z\). Thus we find an element \(k\) of \(K\), elements \(x_1\), \dots, \(x_\ell\) of \(X\), and \(v\) in the free algebra \(\F(k,x_1,\dots,x_\ell)\) such that the canonical morphism
	\[
		\phi\colon \F(k,x_1,\dots,x_\ell)\to \F(z)+X
	\]
	maps \(v\) to \(z\).
	
	Because the final normal form is the single letter \(z\), the last rewrite step in this sequence must be an instance of the rule
	\[
		u*(u\bak y)\to y
	\]
	(with \(y\) instantiated to \(z\)); indeed, a step back in the reduction cannot be a consequence of the rules \(L_3\), \(L_2\) or \(L_4\), because the \(k\in K\) which contains the letter \(z\) was chosen to be in normal form. In other words, immediately before the final step the term has the shape
	\[
		u*(u\bak z)
	\]
	for some sub-term \(u\) built from the letters in \(X\cup\{z\}\). After that last rewrite we obtain \(z\).
	
	We now iteratively analyse the term \(u\). If \(u\) still contains the distinguished letter~\(z\), then the reduction of \(\phi(v)\) to \(z\) must have produced that occurrence of~\(z\) in \(u\) at a previous step; tracing back the finite reduction we obtain an explicit finite sequence of nested preimages showing that \(u\) is obtained by iterating the same pattern \(t \mapsto r*(r\bak t)\) a finite number of times. Concretely, by unwinding the final portion of the reduction sequence we find a finite depth \(m\) and elements \(y_1\), \dots, \(y_m\) from \(X\) (each arising as a root symbol of a sub-term that is built only from letters of \(X\)) such that the sub-term \(u\bak z\) has the syntactic shape
	\[
		y_1\bak\bigl(y_2\bak(\cdots \bak (y_m\bak z)\cdots)\bigr).
	\]
	Thus the rightmost part of the term immediately before the final step is exactly of the form
	\[
		u*(u\bak z)\quad\text{with}\quad u\bak z
		= y_1\bak\bigl(y_2\bak(\cdots\bak(y_m\bak z)\cdots)\bigr),
	\]
	and each \(y_i\) belongs to \(X\).
	
	We recall that \(u\bak z\) is the rightmost part of a word \(w\) in \(\F(z) + X\) made by words in \(X\) and in the kernel \(K\) of \(\lgroup \overline{x} \;\; 1_X \rgroup\) and we observe that its rightmost part must be a word in \(K\), since \(z\) does not belong to \(X\). So the rightmost part of the word \(u\bak z\) is a word \(s \in K\) of the form
	\[
		s \;\coloneq \; x_1\bak\bigl(x_2\bak(\cdots\bak(x_n\bak z)\cdots)\bigr),
	\]
	where \(x_i = y_{i+(j-1)}\) for some \(1\leq j \leq m\) and \(n=m-j+1\).
	
	This means that, applying \(\lgroup \overline{x} \;\; 1_X \rgroup\) to \(s\) (so, replacing \(z\) with \(x\) in the word \(s\)) we obtain the equality
	\[
		x_1\bak\bigl(x_2\bak(\cdots\bak(x_n\bak x)\cdots)\bigr) = e
	\]
	in \(X\)---which concludes the proof.
\end{proof}

\begin{corollary}
	Any left loop is a weakly protomodular object in \(\LPM\).
\end{corollary}

\begin{proof}
	By identity \eqref{x bak x=e} and Theorem~\ref{T:wprot}.
\end{proof}

\begin{example}\label{E:nwp}
	Let \(\N\) be the set of natural numbers with the operations
	\[
		x\bak y  =
		\left\{\begin{array}{lll}
			y   & \text{if \(x = 0\)} & (a)  \\
			y+1 & \text{if \(x > 0\)} & (b)
		\end{array}\right.
		\qquad
		x * y    =
		\left\{\begin{array}{lll}
			x   & \text{if \(y = 0\)}                & (a')   \\
			y   & \text{if \(x = 0\)}                & (b')   \\
			y-1 & \text{if \(x > 0\) and \(y > 0\) } & (c').
		\end{array}\right.
	\]
	We check that \((\N, *, \bak ,0)\) forms an LPM.
	\begin{enumerate}
		\item \(x*(x\bak y)=y\) holds for all possible cases:
		      \begin{itemize}
			      \item if \(x=0\), then \(0*(0\bak y)\stackrel{(a)}{=} 0*y \stackrel{(b')}{=} y\);
			      \item  if \(x>0\), then \(x*(x\bak y)\stackrel{(b)}{=} x*(y+1) \stackrel{(c')}{=} y\).
		      \end{itemize}
		      \setcounter{enumi}{2}
		\item \(x*0=x=0*x\) holds by \((a')\) and \((b')\).
	\end{enumerate}
	This LPM is not weakly protomodular, since any \(x > 0\) does not satisfy condition (ii) in Theorem~\ref{T:wprot}. Therefore, the category \(\LPM\) is not protomodular.
\end{example}

\begin{theorem}\label{T:prot}
	In the category \(\LPM\), the subalgebras of a protomodular object are weakly protomodular.
\end{theorem}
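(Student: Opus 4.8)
The plan is to reduce to the characterisation in Theorem~\ref{T:wprot} and to exploit the fact that the protomodularity of $X$ yields \emph{stably} strong points over $X$, i.e.\ points whose pullbacks are again strong. So let $X$ be a protomodular object of $\LPM$ and let $S$ be a subalgebra of $X$. By Theorem~\ref{T:wprot}, in order to prove that $S$ is weakly protomodular it is enough to check that for every $s\in S$ there exist $s_1,\dots,s_n\in S$ with $s_1\bak(s_2\bak(\cdots\bak(s_n\bak s)\cdots))=e$.

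Fix $s\in S$ and reuse the split extension appearing in the proof of the implication (i)$\Rightarrow$(ii) of Theorem~\ref{T:wprot}: writing $\overline{s}\colon\F(z)\to X$ for the morphism sending the generator $z$ to $s$, consider the point $\bigl(\lgroup\overline{s}\;\;1_X\rgroup\colon\F(z)+X\to X,\ \iota_X\bigr)$ over $X$, with kernel $K$. I would then pull this point back along the inclusion $i\colon S\to X$. Since $i$ is a monomorphism, the pullback is (up to isomorphism) the subalgebra $P=\lgroup\overline{s}\;\;1_X\rgroup^{-1}(S)$ of $\F(z)+X$, equipped with the restriction of $\lgroup\overline{s}\;\;1_X\rgroup$ as its split epimorphism onto $S$, with $\iota_X|_S\colon S\to P$ as section, and with the same kernel $K$ (because $e\in S$). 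The key observation is that $z\in P$, since $\lgroup\overline{s}\;\;1_X\rgroup(z)=s\in S$. Now, because $X$ is a protomodular object, the point we started from is stably strong, so the pulled-back point over $S$ is strong; equivalently, $P$ is generated, as a subalgebra of $\F(z)+X$, by $K\cup\iota_X(S)$. Hence $z$ can be written as a word built from elements of $K$ and elements of $S$, using the operations $*$, $\bak$ and $e$, inside the coproduct $\F(z)+X$.

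At this point the argument is the combinatorial analysis already carried out in the proof of Theorem~\ref{T:wprot}, with $S$ now playing the role of $X$: the rightmost letter of such a word must be $z$, so a suitable rightmost part $w'$ of it lies in $K$; replacing in $w'$ every copy of $z$ except the last one by $s$ and carrying out all possible cancellations---which can only arise from identity~\eqref{identities_leftloop1}---produces an element of $K$ of the form $s_1*_1(s_2*_2\cdots*_{n-1}(s_n*_n z)\cdots)$, where each $s_i\in S$ (a subword not involving $z$ and built from elements of $S$ is an element of the subalgebra $S$) and each $*_i\in\{*,\bak\}$; and, again because the only available cancellations in $\F(z)+X$ come from~\eqref{identities_leftloop1}, all the $*_i$ must equal $\bak$. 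Applying $\lgroup\overline{s}\;\;1_X\rgroup$ to this element of $K$, and using that it fixes each $s_i$ and sends $z$ to $s$, we obtain $s_1\bak(s_2\bak(\cdots\bak(s_n\bak s)\cdots))=e$, an identity that already holds in $S$. By Theorem~\ref{T:wprot}, this proves that $S$ is weakly protomodular.

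The step I expect to be the crux is the passage through stability: it is essential that $X$ be protomodular, and not merely weakly protomodular, because the point whose strongness we use is the pullback of a point over $X$ along $S\to X$, which is a point over $S$ rather than over $X$; weak protomodularity of $X$ would only tell us that points over $X$ itself are strong, which is useless here. The remaining work is a faithful transcription of the proof of Theorem~\ref{T:wprot}, the only bookkeeping subtlety being to confirm that the extracted elements $s_i$ really do lie in the subalgebra $S$.
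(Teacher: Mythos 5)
Your proposal is correct and follows essentially the same route as the paper's own proof: pull back the free-product point $\bigl(\lgroup\overline{s}\;\;1_X\rgroup,\iota_X\bigr)$ along the subalgebra inclusion, use that protomodularity of the ambient object makes this point stably strong so that the pulled-back point over the subalgebra is strong, and then rerun the combinatorial word analysis from Theorem~\ref{T:wprot}. Your explicit identification of the pullback as the preimage $\lgroup\overline{s}\;\;1_X\rgroup^{-1}(S)$ and your remark that the extracted letters $s_i$ land in the subalgebra are exactly the (lightly implicit) steps of the published argument.
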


\begin{proof}
	Let \(Y\) be a protomodular object in \(\LPM\) and \(X\) a subalgebra of \(Y\). Given an element \(x \in X\), we show that condition (ii) in Theorem~\ref{T:wprot} holds. Let us consider the pullback diagram
	\[
		\xymatrix@C=4em{
		K \ar@{ |>->}[d]_-{} \ar@{=}[r] & K \ar@{ |>->}[d]^{} \\
		P \ophalfsplitpullback \ar@<.5ex>[d]^(.6){\pi_X} \ar@{ >->}[r]^-{\pi_{\F(z) + Y}} & \F(z) + Y \ar@<.5ex>[d]^-{\lgroup \overline{x} \;\; 1_Y \rgroup} \\
		X \ar@{ >->}@<.5ex>[u]^-{\langle 1_{X}, \iota_{Y} \circ i \rangle} \ar@{{ >}->}[r]_-i & Y, \ar@{ >->}@<.5ex>[u]^-{\iota_{Y}}
		}
	\]
	where \(i\) is the inclusion, \(\F(z)\) is the free LPM on one generator \(z\), \(\overline{x}\) denotes the universal morphism sending \(z\) to \(x\), \(\iota_Y\) is the coproduct inclusion, \(\lgroup \overline{x} \;\; 1_Y \rgroup\) is induced by the universal property of the coproduct, and \(K\) is the kernel of the vertical split epimorphisms. Note that the pullback \(P\) consists of those words of \(F(z) + Y\) such that, when \(z\) is replaced by \(x\) and the operations are computed, the resulting word belongs to \(X\). Since we are assuming that \(Y\) is a protomodular object, the left vertical point is strong. Moreover, the element $z$ belongs to $P$. Therefore, using exactly the same argument as in the proof of Theorem~\ref{T:wprot}, we end up with a word of the form
	\[
		x_1 \bak \bigl(x_2 \bak( \cdots \bak (x_{n}  \bak z) \cdots )\bigr)
	\]
	which belongs to the kernel \(K\) of \(\lgroup \overline{x} \;\; 1_Y \rgroup\). Hence, applying \(\lgroup \overline{x} \;\; 1_Y \rgroup\) to such word (which means replacing \(z\) with \(x\)), we see that \(x\) satisfies condition (ii) in Theorem~\ref{T:wprot}.
\end{proof}

\begin{proposition} \label{if x div x=e then protomodular}
	Let \(X\) be an LPM such that \(x \bak x = e\) for all \(x \in X\). Then \(X\) is a protomodular object in \(\LPM\).
\end{proposition}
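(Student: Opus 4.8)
The plan is to verify directly the defining condition of a protomodular object: that every point over $X$ is stably strong, i.e.\ that every pullback of such a point along an arbitrary morphism is strong. So I would fix a point $(f\colon Y\to X, s)$ over $X$ with kernel $k\colon K\rightarrowtail Y$ and an arbitrary morphism $g\colon W\to X$, and form the pulled-back point over $W$. Since pullbacks in the variety $\LPM$ are computed on underlying sets, this point is carried by $P=\{(y,w)\in Y\times W\mid f(y)=g(w)\}$, with split epimorphism $\pi_W\colon P\to W$ given by $\pi_W(y,w)=w$, section $\sigma\colon W\to P$ given by $\sigma(w)=(sg(w),w)$, and kernel $K'=\{(y,e)\in P\mid f(y)=e\}$. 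The goal is to prove that $\sigma$ and the inclusion of $K'$ are jointly extremal-epimorphic, which, in a variety, amounts to showing that $\sigma(W)\cup K'$ generates $P$.

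The key computation runs as follows. Fix $(y,w)\in P$ and consider the element $sg(w)\bak y$ of $Y$. Since $f$ is a morphism and $f(y)=g(w)$,
\[
	f\bigl(sg(w)\bak y\bigr)=f(sg(w))\bak f(y)=g(w)\bak f(y)=f(y)\bak f(y)=e,
\]
the last equality being the hypothesis $x\bak x=e$ applied in $X$. Hence $(sg(w)\bak y,e)$ lies in $P$, and indeed in $K'$. Moreover, using~\eqref{identities_leftloop1} in the first coordinate and~\eqref{identities_leftloop3} in the second,
\[
	\sigma(w)*(sg(w)\bak y,e)=\bigl(sg(w)*(sg(w)\bak y),\,w*e\bigr)=(y,w).
\]
So every element of $P$ is a product of an element of $\sigma(W)$ with an element of $K'$; in particular $\sigma(W)\cup K'$ generates $P$, so the pulled-back point is strong. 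As $(f,s)$ and $g$ were arbitrary, $X$ is a protomodular object.

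The one point requiring care---and the only place where I expect a naive attempt to stumble---is the choice of kernel element. Mimicking the proof of Theorem~\ref{T:wprot} and writing $(y,w)=\sigma(w)*\bigl(\sigma(w)\bak(y,w)\bigr)$ does not help, because the second coordinate of $\sigma(w)\bak(y,w)$ is $w\bak w$, which need not be $e$: the arbitrary object $W$ is not assumed to satisfy $x\bak x=e$. The remedy is to adjust only the first coordinate, multiplying on the right by $(sg(w)\bak y,e)\in K'$ and using the unit law $w*e=w$ to leave the second coordinate alone; the hypothesis on $X$ is exactly what places $sg(w)\bak y$ in $K$. (Taking $W=X$ and $g=\id_X$ in this argument already recovers, via Theorem~\ref{T:wprot} or directly, that such an $X$ is weakly protomodular---and in particular that every left loop is, since left loops satisfy $x\bak x=e$ by~\eqref{x bak x=e}.)
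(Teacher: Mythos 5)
Your proof is correct and is essentially the paper's own argument, just with the roles of the two pullback coordinates swapped: the paper decomposes $(y,z)=(y,sg(y))*(e,sg(y)\bak z)$, which is exactly your $(y,w)=\sigma(w)*(sg(w)\bak y,e)$ up to relabelling, and both hinge on the same computation that $x\bak x=e$ in $X$ forces $sg(w)\bak y$ into the kernel while the unit law keeps the other coordinate fixed. Nothing further to add.
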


\begin{proof}
	Note that \(X\) is obviously a weakly protomodular object, by Theorem~\ref{T:wprot}. Let us prove that \(X\) is, moreover, a protomodular object by showing that any point \((f,s)\) over \(X\) is stably strong. Consider the following pullback of \((f,s)\) along an arbitrary morphism \(g\):
	\[
		\xymatrix@C=4em{
		K \ar@{ |>->}[d]_-{} \ar@{=}[r] & K \ar@{ |>->}[d]^{} \\
		P \ophalfsplitpullback \ar@<.5ex>[d]^(.6){\pi_Y} \ar[r]^-{\pi_{Z}} & Z \ar@<.5ex>[d]^-{f} \\
		Y \ar@{ >->}@<.5ex>[u]^-{\langle 1_{Y}, s \circ g \rangle} \ar[r]_-g & X. \ar@{ >->}@<.5ex>[u]^-{s}
		}
	\]
	Here \(K\) is the kernel of the vertical split epimorphisms.
	A pair \((y, z)\in Y\times Z\) belongs to \(P\) if an only if \(g(y) = f(z)\). Then, thanks to our assumption, which implies that
	\[
		f(sg(y) \bak z)=g(y)\bak f(z)=e=g(e)\text{,}
	\]
	we can decompose \((y, z)\) in \(P\) as
	\[
		(y, z) = (y, sg(y)) * (e, sg(y) \bak z ),
	\]
	where the first pair belongs to the image of \(\langle 1_{Y}, s \circ g \rangle\) and the second one belongs to the kernel \(K\) of \(\pi_Y\). This proves that the point \((\pi_Y,\langle 1_{Y}, s \circ g \rangle)\) is strong; thus \((f,s)\) is a stably strong point.
\end{proof}

\begin{corollary}
	Any left loop is a protomodular object in \(\LPM\).
\end{corollary}

\begin{proof}
	By \eqref{x bak x=e} and Proposition~\ref{if x div x=e then protomodular}.
\end{proof}

Now we can show that there exists a weakly protomodular object in \(\LPM\) which is not protomodular. To do that, we consider the following example.

\begin{example}\label{E: wproto, not proto}
	Let \(\Z\) be the set of integers with the operations
	\[
		x\bak y  =
		\left\{\begin{array}{lll}
			y     & \text{if \(x = 0\)}                         & (a)  \\
			y+1   & \text{if \(x > 0\) and \(y \geq 0\)}        & (b)  \\
			y     & \text{if \(x > 0 > y\)}                     & (c)  \\
			-2y-1 & \text{if \(x < 0 \leq y\)}                  & (d)  \\
			2y    & \text{if \(x\), \(y < 0\) and \(x \neq y\)} & (e)  \\
			0     & \text{if \(x=y < 0\)}                       & (f)
		\end{array}\right.
	\]
	and
	\[
		x * y    =
		\left\{\begin{array}{lll}
			x              & \text{if \(y = 0\)}            & (a')  \\
			y              & \text{if \(x = 0\)}            & (b')  \\
			y-1            & \text{if \(x\), \(y > 0\)}     & (c')  \\
			y              & \text{if \(x > 0 > y\)}        & (d')  \\
			\frac{-y-1}{2} & \text{if \(x < 0\), \(y\) odd} & (e')  \\
			\frac{y}{2}    & \text{if \(x < 0\neq y\) even} & (f')
		\end{array}\right.
	\]
	on \(\Z\). We check that \((\Z,*,\bak,0)\) forms an LPM.
	\begin{enumerate}
		\item \(x*(x\bak y)=y\) holds for all the possible cases:
		      \begin{itemize}
			      \item if \(x=0\), then \(0*(0\bak y)\stackrel{(a)}{=} 0*y \stackrel{(b')}{=} y\);
			      \item if \(x>0\), \(y\ge 0\), then \(x*(x\bak y)\stackrel{(b)}{=} x*(y+1) \stackrel{(c')}{=} y\);
			      \item if \(x>0>y\), then \(x*(x\bak y)\stackrel{(c)}{=} x*y \stackrel{(d')}{=} y\);
			      \item if \(x<0\le y\), then \(x*(x\bak y)\stackrel{(d)}{=} x*(-2y-1) \stackrel{(e')}{=} \dfrac{-(-2y-1)-1}{2}=y\);
			      \item if \(x,y<0\), \(x\neq y\), then \(x*(x\bak y)\stackrel{(e)}{=} x*(2y) \stackrel{(f')}{=} \dfrac{2y}{2}=y\);
			      \item if \(x=y<0\), then \(x*(x\bak y)\stackrel{(f)}{=} x*0 \stackrel{(a')}{=} x=y\).
		      \end{itemize}
		      \setcounter{enumi}{2}
		\item \(x*0=x=0*x\) holds by \((a')\) and \((b')\).
	\end{enumerate}
	Moreover, \((\Z,*,\bak,0)\) is weakly protomodular, since it satisfies condition (ii) in Theorem~\ref{T:wprot}:
	\[
		0 =
		\begin{cases}
			x \bak x                & \text{if \(x \leq 0\)} \\
			(-2x-1)\bak (-1 \bak x) & \text{if \(x > 0\).}
		\end{cases}
	\]
	It is straightforward to check that the operations defined above for \(\Z\), restricted to natural numbers, are precisely those given in Example~\ref{E:nwp}. The non--weakly protomodular LPM of Example~\ref{E:nwp} is then a subalgebra of this LPM. Therefore, by Theorem~\ref{T:prot} we have found an LPM which is weakly protomodular but not protomodular.
\end{example}

\begin{remark} We can generalise Example~\ref{E: wproto, not proto} as follows. Consider \(\Z\) with the operations
	\[
		x\bak y  =
		\left\{\begin{array}{lll}
			y     & \text{if \(x = 0\)}                         & (a)  \\
			y+i   & \text{if \(x > 0\) and \(y \geq 0\)}        & (b)  \\
			y     & \text{if \(x > 0 > y\)}                     & (c)  \\
			-ky-j & \text{if \(x < 0 \leq y\)}                  & (d)  \\
			ky    & \text{if \(x\), \(y < 0\) and \(x \neq y\)} & (e)  \\
			0     & \text{if \(x=y < 0\)}                       & (f)
		\end{array}\right.
	\]
	and
	\[
		x * y    =
		\left\{\begin{array}{lll}
			x                  & \text{if \(y = 0\)}                    & (a')                      \\
			y                  & \text{if \(x = 0\)}                    & (b')                      \\
			y-i                & \text{if \(x\), \(y > 0\)}             & (c')                      \\
			y                  & \text{if \(x > 0 > y\)}                & (d')                      \\
			\frac{-y-(k-1)}{k} & \text{if \(x < 0\), \(y\equiv_k 1\)}   & (e_1')   \vspace{5pt}     \\
			\frac{-y-(k-2)}{k} & \text{if \(x < 0\), \(y\equiv_k 2\)}   & (e_2')                    \\
			\quad\vdots                                                                             \\
			\frac{-y-j}{k}     & \text{if \(x < 0\), \(y\equiv_k k-j\)} & (e_{k-j}')                \\
			\quad\vdots                                                                             \\
			\frac{-y-1}{k}     & \text{if \(x < 0\), \(y\equiv_k k-1\)} & (e_{k-1}')   \vspace{5pt} \\
			\frac{y}{k}        & \text{if \(x < 0\neq y, y\equiv_k 0\)} & (f'),
			
		\end{array}\right.
	\]
	on \(\Z\), for fixed integers \(i\geq 1\), \(k\ge 2\), \(j\in\{1, \dots, k-1\}\). The proof that this is an LPM is similar to that of Example~\ref{E: wproto, not proto}. To show that \(x*(x\bak y)=y\) in the case \(x<0\le y\), we calculate:
	\[
		x*(x\bak y)\stackrel{(d)}{=} x*(-ky-j) \stackrel{(e_{k-j}')}{=} \dfrac{-(-ky-j)-j}{k}=y.
	\]
	It is also a weakly protomodular object, since condition (ii) in Theorem~\ref{T:wprot} is satisfied:
	\[
		0 =
		\begin{cases}
			x \bak x                & \text{if \(x \leq 0\)} \\
			(-kx-j)\bak (-1 \bak x) & \text{if \(x > 0\).}
		\end{cases}
	\]
\end{remark}

The next example shows that there exists an LPM \(X\) satisfying the identity \(x\bak x=e\)---thus, by Proposition~\ref{if x div x=e then protomodular}, a protomodular object in \(\LPM\)---that is not a left loop.
\begin{example}\label{E:protnonloop}
	Let \(\N\) be the set of natural numbers with the operations
	\begin{align*}
		x\bak y  =
		\begin{cases}
			y   & \text{if \(x = 0\)}                  \\
			0   & \text{if \(x = y\)}                  \\
			y+1 & \text{if \(x > 0\) and \(x \neq y\)}
		\end{cases}
		\qquad\qquad
		x * y    =
		\begin{cases}
			x   & \text{if \(y = 0\)}         \\
			y   & \text{if \(x = 0\)}         \\
			y-1 & \text{if \(x\), \(y > 0\).}
		\end{cases}
	\end{align*}
	Using similar arguments to those in Example~\ref{E:nwp}, it is easy to check that \((\N,*,\bak,0)\) is an LPM. Thanks to Proposition~\ref{if x div x=e then protomodular}, it is a protomodular object in \(\LPM\): for any element \(x \in \mathbb{N}\), we have that \(x \bak x = 0\). Nevertheless, it is not a left loop, since
	\[
		1 \bak (1 * 2) = 1 \bak 1 = 0 \neq 2.
	\]
\end{example}

This, together with Example~\ref{E: wproto, not proto}, proves that the classes of left loops, weakly protomodular objects in \(\LPM\) and protomodular objects in \(\LPM\) are strictly included in each other:
\[
	\{\text{left loops}\} \;\subsetneq\; \{\text{protomodular objects}\} \;\subsetneq\; \{\text{weakly protomodular objects}\}.
\]

\section*{Acknowledgements}
The authors wish to express their gratitude to Manuel Ladra and the University of Santiago de Compostela for their kind hospitality. Many thanks also to the referee for their careful reading of the first version of this article.


\begin{thebibliography}{10}
	
	\bibitem{Borceux-Bourn}
	F.~Borceux and D.~Bourn, \emph{Mal'cev, protomodular, homological and
		semi-abelian categories}, Math. Appl., vol. 566, Kluwer Acad. Publ., 2004.
	
	\bibitem{Bourn1991}
	D.~Bourn, \emph{Normalization equivalence, kernel equivalence and affine
		categories}, Category {T}heory, {P}roceedings {C}omo 1990 (A.~Carboni, M.~C.
	Pedicchio, and G.~Rosolini, eds.), Lecture Notes in Math., vol. 1488,
	Springer, 1991, pp.~43--62.
	
	\bibitem{B96}
	D.~Bourn, \emph{Mal'cev categories and fibration of pointed objects}, Appl.\
	Categ.\ Structures \textbf{4} (1996), 307--327.
	
	\bibitem{BoJa}
	D.~Bourn and G.~Janelidze, \emph{Characterization of protomodular varieties of
		universal algebras}, Theory Appl. Categ. \textbf{11} (2003), No. 6, 143--147. 
	
	\bibitem{Bourn-Janelidze:Torsors}
	D.~Bourn and G.~Janelidze, \emph{Extensions with abelian kernels in
		protomodular categories}, Georgian Math. J. \textbf{11} (2004), no.~4,
	645--654.
	
	\bibitem{BR2012}
	D.~Bourn and D.~Rodelo, \emph{Comprehensive factorization and \({I}\)-central
		extensions}, {J.~Pure} Appl. Algebra \textbf{216} (2012), 598--617.
	
	\bibitem{EGVdL}
	T.~Everaert, M.~Gran, and T.~Van~der Linden, \emph{Higher {H}opf formulae for
	homology via {G}alois {T}heory}, Adv.~Math. \textbf{217} (2008), no.~5,
	2231--2267.
	
	\bibitem{GM-ACS}
	X.~Garc{\'\i}a-Mart{\'\i}nez, \emph{A new characterisation of groups amongst
		monoids}, Appl.\ Categ.\ Structures \textbf{25} (2017), no.~4, 659--661.
	
	\bibitem{GM-VdL1}
	X.~Garc{\'\i}a-Mart{\'\i}nez and T.~Van~der Linden, \emph{A note on split
		extensions of bialgebras}, Forum Math. \textbf{30} (2018), no.~5, 1089--1096.
	
	\bibitem{HVdL}
	M.~Hartl and T.~Van~der Linden, \emph{The ternary commutator obstruction for
		internal crossed modules}, Adv.~Math. \textbf{232} (2013), no.~1, 571--607.
	
	\bibitem{Janelidze-Marki-Tholen}
	G.~Janelidze, L.~M{\'a}rki, and W.~Tholen, \emph{Semi-abelian categories},
	J.~Pure Appl. Algebra \textbf{168} (2002), no.~2--3, 367--386.
	
	\bibitem{MRVdL-TCOGAM}
	A.~Montoli, D.~Rodelo, and T.~Van~der Linden, \emph{Two characterisations of
		groups amongst monoids}, {J.~Pure} Appl. Algebra \textbf{222} (2018),
	747--777.
	
	\bibitem{RVdL2}
	D.~Rodelo and T.~Van~der Linden, \emph{Higher central extensions and
		cohomology}, Adv. Math. \textbf{287} (2016), 31--108.
	
\end{thebibliography}
\end{document}